\newtheorem{theorem}{Theorem}[section]
\newtheorem{proposition}[theorem]{Proposition}
\newtheorem{corollary}[theorem]{Corollary}
\theoremstyle{definition}
\newtheorem{definition}[theorem]{Definition}
\theoremstyle{remark}
\numberwithin{equation}{section}
\begin{document}
\setcounter{page}{1}

\title[  Dixmier traces for discrete pseudo-differential operators ]{ Dixmier traces for discrete pseudo-differential operators}

\author[D. Cardona]{Duv\'an Cardona}
\address{
		Duv\'an Cardona:
		\endgraf
		Department of Mathematics: Analysis Logic and Discrete Mathematics
		\endgraf
		Ghent University
		\endgraf
		Ghent-Belgium
		\endgraf
		{\it E-mail address} {\rm duvanc306@gmail.com}
		}

\author[C. del Corral]{C\'esar del Corral}
\address{
  C\'esar del Corral:
  \endgraf
  Department of Mathematics  
  \endgraf
  Universidad de los Andes
  \endgraf
  Bogot\'a
  \endgraf
  Colombia
  \endgraf
  {\it E-mail address} {\rm ce-del@uniandes.edu.co; cesar.math@gmail.com}
  }

\author[V. Kumar]{Vishvesh Kumar}
\address{
		Vishvesh Kumar:
		\endgraf
		Department of Mathematics: Analysis Logic and Discrete Mathematics
		\endgraf
		Ghent University
		\endgraf
		Ghent-Belgium
		\endgraf
		{\it E-mail address} {\rm vishveshmishra@gmail.com}
		\endgraf
	}

\dedicatory{Dedicated to Professor L\'azaro Recht on his 79 birthday}

\subjclass[2010]{Primary {}.}

\keywords{ Discrete pseudo-differential operators; Compact operators; Dixmier ideal; Dixmier Traces; Marcinkiewicz ideal}

\begin{abstract}
In this paper we provide sharp results  for the Dixmier traceability of discrete pseudo-differential operators on  $\ell^2(\mathbb{Z}^n)$. In this setting, we introduce a suitable notion of a class of classical symbols which provide a class of Dixmier traceable discrete pseudo-differential operators. We also present a formula for the Dixmier trace of a Dixmier traceable discrete pseudo-differential operator  by using the Connes equivalence between the Wodzicki residue and the Dixmier trace. 
\textbf{MSC 2010.} Primary { 47G30; Secondary 58B34, 47L20}.
\end{abstract} \maketitle

\section{Introduction}
In this paper we study the membership of global pseudo-differential operators  on $\mathbb{Z}^n,$ also known as discrete pseudo-differential operators, associated with the discrete H\"ormander symbol classes   introduced by Botchway, Kibity and Ruzhansky  \cite{Ruzhansky} to the  Dixmier ideal of operators on $\ell^2(\mathbb{Z}^n).$ For this purpose, we use a fundamental tool of non-commutative geometry, namely, the celebrated theorem of Connes on the equivalence of the Wodzicki residue and the Dixmier trace \cite{Connes88}. Recently, due to its applications in  number theory, difference equations  and many other problems involving the discretization of differential equations, the theory of pseudo-differential operators on $\mathbb{Z}^n$ has gained an important role in harmonic analysis \cite{Pie2,st3,Hooley,IW,Pie,st,st2,st3}. Discrete pseudo-differential operators appear early in the classic Fourier analysis (see \cite{Duo,GrafakosBook}) as well as discrete counterparts of Calder\'on-Zygmund singular integral operators. 

 The question ``whether the $C^*$-algebra  of all
bounded linear operators on a Hilbert space $H$ had a unique non-trivial trace" led to the concept of 
the Dixmier trace $\textnormal{Tr}_\omega.$ In 1966, Dixmier answered this question in negative in a note  published in Comptes Rendus \cite{Dix66}. To obtain the trace he used an invariant mean $\omega$ on the
 $ax+b$ group. In contrast to the usual trace, his trace vanishes on the ideal of trace class operators.
Applications of the Dixmier trace in differential  geometry as well as in non-commutative geometry were facilitated by the celebrated theorem of  Connes \cite{Connes88} which relates the trace to the Wodzicki  residue \cite{Wodzicki} for pseudo-differential operators on a closed manifold. Both the Dixmier trace and
the Wodzicki residue play important roles in noncommutative geometry and
its applications to fractal theory,  foliation theory, spaces of non-commuting coordinates, perturbation theory, and quantum field theory \cite{Connes88}.

In a recent context, some propeties of the pseudo-differential operators on $\mathbb{Z}^n,$ studied  by Molahajloo in \cite{m} which were initially introduced by Rabinovich and Rabinovich and Roch \cite{RabRoch, Rab3} but without the symbolic calculus. But still the symbolic calculus for this quantization was missing. More recently,  the symbol classes and symbolic calculus  have been introduced and  constructed in the work of Botchway,  Kibiti, and  Ruzhansky \cite{Ruzhansky} by asking the symbolic condition for decay in the space variable rather than frequency variable. They also presented  its  applications to difference equations, as well as the G\r{a}rding and the sharp G\r{a}rding inequality on $\mathbb{Z}^n$. In this setting the pseudo-differential operators on $\mathbb{Z}^n$ are defined by 
\begin{equation}\label{pseudo}
  t_m f(n'):=\int\limits_{\mathbb{T}^n} e^{i2\pi n'\cdot\xi}m(n',\xi)(\mathscr{F}f)(\xi)d\xi,\,\,\,f\in \mathscr{S}(\mathbb{Z}^n),\,n'\in\mathbb{Z}^n.
\end{equation}
Under suitable conditions on the symbol $m,$ we have that $t_m$ is a continuous linear operator from the discrete Schwartz space $\mathscr{S}(\mathbb{Z}^n)$ into itself. This is the case of symbols in the discrete H\"ormander classes (see e.g. \cite{Ruzhansky}).
The references Rabinovich\cite{Rab1,Rab2}, and Rabinovich and Roch\cite{Rab3,Rab4} are predecessor works on the subject.  Analytic and spectral properties as the ellipticity and Fredholmness, nuclearity, the spectral traces, and the $L^{p}$ and weak-$L^p$-boundedness  for these operators have been considered in Botchway,  Kibiti, and  Ruzhansky \cite{Ruzhansky}, Catana \cite{Cat14}, Wong\cite{Wong2},  Rodriguez \cite{rod}, Dasgupta and Kumar \cite{Aparajita19},  and  the works of the first and third author \cite{KC2018}. In particular, the Calder\'on-Vaillancourt theorem and the Gohberg Lemma for the discrete H\"ormander classes in \cite{Ruzhansky} have been proved in Cardona \cite{cardonazn}. 

In this paper we are interested in providing sharp conditions on $m,$ in order that $t_m$ in \eqref{pseudo}, belongs to the Dixmier ideal $\mathcal{L}^{1,\infty}(\ell^2(\mathbb{Z}^n)).$
The Dixmier traceability of pseudo-differential operators on smooth manifolds is a classical problem. Indeed, the problem was treated by using the notion of local symbols in  the works of Connes, Dixmier, Fedosov, Grubb, Schrohe, \cite{Connes,Fedosov,GGIS,Schrohe2,Schrohe,Wodzicki} and references therein. We refer the reader to \cite{Cd2} in which the Dixmier traceability of pseudo-differential operators on compact manifolds has been studied by using the matrix-valued quantization (see \cite{Ruz}).

Our main result is Theorem \ref{mainTheorem}, where we prove that, under the condition 
 $t_m\in \Psi^{-n}_{cl}(\mathbb{Z}^n)$ (see Definition \ref{CZN})  $t_m$ being a positive pseudo-differential operator, its Dixmier trace is given, surprisingly,  by the formula
\begin{equation}\label{maineq}
\textnormal{Tr}_\omega(t_m)= \frac{1}{n(2\pi)^n} \int\limits_{\mathbb{T}^n}\int\limits_{\mathbb{S}^{n-1}}[i^*(\tau)(x,\xi)]_{(-n)}d\Sigma(\xi) d_{\textnormal{Vol}}x,\,\,\tau(x,k):=\overline{m(-k,x)},
\end{equation} where $d\Sigma(\xi)$ is the surface measure associated to the sphere $\mathbb{S}^{n-1}\subset \mathbb{R}^n,$ and $d_{\textnormal{Vol}}x$ is the volume form on $M=[0,1)^n.$  The integral formula  \eqref{maineq} shows that we can capture the spectral information in  the Dixmier trace of an operator in terms of the geometry of the manifold $\mathbb{T}^n\times \mathbb{S}^{n-1},$ where the density $d\Sigma(\xi) d_{\textnormal{Vol}}x$ appears, together with the  Fourier analysis encoded in the global symbol $i^{*}\tau=i^*(\overline{m(-\cdot,\cdot)})$.

In order to prove this formula, we will provide some preliminaries in Section \ref{preliminaries} and later we will present the proof in Section \ref{proof}.

\section{Pseudo-differential calculus on $\mathbb{Z}^n$ and the $n$-torus $\mathbb{T}^n$}\label{preliminaries}

In this paper we will use the usual notation and terminology given in the standard monographs or papers (see e.g. \cite{f, Hor1, Hor2, Ruz, Wong, Wong2, Rt}). The discrete Schwartz space $\mathscr{S}(\mathbb{Z}^n)$ is the  the space of all functions $\phi:\mathbb{Z}^n\rightarrow \mathbb{C}$ such that 
 \begin{equation}
 \forall M\in\mathbb{R},\, \exists\, C_{M}>0,\, \text{such that}\, |\phi(\xi)|\leq C_{M}\langle \xi \rangle^M,
 \end{equation}
where $\langle \xi \rangle=(1+|\xi|^2)^{\frac{1}{2}}.$ The toroidal Fourier transform is defined as follows:  for any $f\in C^{\infty}(\mathbb{T}^n),$   $$ \widehat{f}(\xi)=\int\limits_{\mathbb{T}^n}e^{-i2\pi\langle x,\xi\rangle}f(x)dx,\,\,\xi\in\mathbb{Z}^n.$$ The inverse toroidal Fourier transform is given by the following formula: for $f \in C^\infty(\mathbb{T}^n)$ $$f(x)=\sum\limits_{\xi\in\mathbb{Z}^n}e^{i2\pi\langle x,\xi \rangle }\widehat{f}(\xi),\,\,x\in\mathbb{T}^n.$$ 
The fundamental tool in the analysis of  periodic operators, that is, operators defined on function spaces on $\mathbb{T}^n,$ are the  H\"ormander class of periodic symbols. For every $m\in \mathbb{R},$ the periodic symbol of H\"ormander class of order $m$ and of $(\rho,\delta)$-type, denoted by $S^m_{\rho,\delta}(\mathbb{T}^n\times \mathbb{R}^n), \,\, 0\leq \rho,\delta\leq 1,$  consists of those functions $a(x,\xi)$ which are smooth in $(x,\xi)\in \mathbb{T}^n\times \mathbb{R}^n$ and which satisfy the following toroidal symbols inequalities
\begin{equation}\label{css}
|\partial^{\beta}_{x}\partial^{\alpha}_{\xi}a(x,\xi)|\leq C_{\alpha,\beta}\langle \xi \rangle^{m-\rho|\alpha|+\delta|\beta|}.
\end{equation}
It is well-known fact that Symbols in $S^m_{\rho,\delta}(\mathbb{T}^n\times \mathbb{R}^n)$ can be considered as symbols in $S^m_{\rho,\delta}(\mathbb{R}^n\times \mathbb{R}^n)$ (see \cite{Hor1, Ruz}) of order $m$ which are 1-periodic in $x.$
If $a(x,\xi)\in S^{m}_{\rho,\delta}(\mathbb{T}^n\times \mathbb{R}^n),$ the corresponding pseudo-differential operator is defined by
\begin{equation}\label{hh}
a(X,D_x)u(x)=\int\limits_{\mathbb{T}^n}\int\limits_{\mathbb{R}^n}e^{i2\pi\langle x-y,\xi \rangle}a(x,\xi)u(y)d\xi dy,\,\, u\in C^{\infty}(\mathbb{T}^n).
\end{equation}
The symbol class $S^m_{\rho,\delta}(\mathbb{T}^n\times \mathbb{Z}^n),\, 0\leq \rho,\delta\leq 1,$ consists of  those functions $a(x, \xi)$ which are smooth in $x$  for all $\xi\in\mathbb{Z}^n$ and satisfy

\begin{equation}\label{cs}
\forall \alpha,\beta\in\mathbb{N}^n,\exists\, C_{\alpha,\beta}>0,\,\, |\Delta^{\alpha}_{\xi}\partial^{\beta}_{x}a(x,\xi)|\leq C_{\alpha,\beta}\langle \xi \rangle^{m-\rho|\alpha|+\delta|\beta|}.
 \end{equation}

The operator $\Delta_\xi$ in  \eqref{cs} is the difference operator which is defined as follows: if $f:\mathbb{Z}^n\rightarrow \mathbb{C}$ is a discrete function and $(e_j)_{1\leq j\leq n}$ is the canonical basis of $\mathbb{R}^n,$ then $\Delta_\xi^\alpha$ is defined by
\begin{equation}
(\Delta_{\xi_{j}} f)(\xi)=f(\xi+e_{j})-f(\xi).
\end{equation}
If $k\in\mathbb{N},$ denote by $\Delta^k_{\xi_{j}}$  the composition of $\Delta_{\xi_{j}}$ with itself $k-$times. Finally, if $\alpha\in\mathbb{N}^n,$ $\Delta^{\alpha}_{\xi}= \Delta^{\alpha_1}_{\xi_{1}}\cdots \Delta^{\alpha_n}_{\xi_{n}}.$

The toroidal pseudo-differential operator (or periodic pseudo-differential operator) corresponding to the symbol $a(x,\xi) \in S^m_{\rho,\delta}(\mathbb{T}^n\times \mathbb{Z}^n) $ is defined by
\begin{equation}\label{aa}
a(x,D_x)u(x)=\sum_{\xi\in\mathbb{Z}^n}e^{i 2\pi\langle x,\xi\rangle}a(x,\xi)\widehat{u}(\xi),\,\, u\in C^{\infty}(\mathbb{T}^n).
\end{equation}
Similarly,  the H\"ormander classes on $\mathbb{Z}^n,$ (see Bothcway, Kibity and Ruzhansky \cite{Ruzhansky}) $S^m_{\rho,\delta}(\mathbb{Z}^n\times \mathbb{T}^n ),$ $0\leq \delta,\rho\leq 1,$ are defined by those functions $\sigma:\mathbb{Z}^n \times \mathbb{T}^n $ such that $ \sigma(n', \cdot) \in C^{\infty}(\mathbb{T}^n )$ for all $n' \in \mathbb{Z}^n,$ and for all multi-indices $\alpha, \beta$ there exists a constant $C_{\alpha, \beta}>0$ such that 
\begin{equation}\label{calderon3}
  |\partial_x^\beta\Delta_{n'}^\alpha \sigma(n', x)|\leq C_{\alpha,\beta}(1+|n'|)^{m-\rho|\alpha|+\delta|\beta|},
\end{equation} for all $n' \in \mathbb{Z}^n$ and for all $x \in \mathbb{T}^n.$
In this case we denote $\Psi^{m}_{\rho,\delta}(\mathbb{Z}^n\times \mathbb{T}^n)=\{t_\sigma:\sigma\in S^m_{\rho,\delta}(\mathbb{Z}^n\times \mathbb{T}^n )\}.$

The following proposition says that there exists a method  to interpolate the second argument of symbols on $\mathbb{T}^n\times \mathbb{Z}^n$ in a smooth way to get a symbol defined on $\mathbb{T}^n\times \mathbb{R}^n.$ The proof can be found in \cite[Theorem 4.5.3]{ Ruz}.
\begin{proposition}\label{eq}
Let $0\leq \delta \leq 1,$ $0< \rho\leq 1.$ The symbol $a\in S^m_{\rho,\delta}(\mathbb{T}^n\times \mathbb{Z}^n)$ if only if there exists  a Euclidean symbol $a'\in S^m_{\rho,\delta}(\mathbb{T}^n\times \mathbb{R}^n)$ such that $a=a'|_{\mathbb{T}^n\times \mathbb{Z}^n}.$
\end{proposition}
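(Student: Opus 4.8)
The plan is to prove the two implications separately, noting that the restriction direction is routine while the extension direction carries all the technical weight. For the easy implication, suppose $a=a'|_{\mathbb{T}^n\times\mathbb{Z}^n}$ with $a'\in S^m_{\rho,\delta}(\mathbb{T}^n\times\mathbb{R}^n)$; I must recover the toroidal estimates \eqref{cs} from the Euclidean estimates \eqref{css}, and the only point is to control the forward differences $\Delta^\alpha_\xi$ by ordinary $\xi$-derivatives. Applying the discrete fundamental theorem of calculus coordinatewise, $\Delta_{\xi_j}a'(x,\xi)=\int_0^1(\partial_{\xi_j}a')(x,\xi+te_j)\,dt$, and iterating over $\alpha$ exhibits $\Delta^\alpha_\xi a'(x,\xi)$ as an average of $\partial^\alpha_\xi a'(x,\xi+s)$ with $s$ ranging over a bounded box. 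Since $\langle\xi+s\rangle$ is comparable to $\langle\xi\rangle$ uniformly for bounded $s$, feeding \eqref{css} in and commuting $\partial^\beta_x$ past $\Delta^\alpha_\xi$ gives \eqref{cs} with comparable constants.

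For the harder implication I would construct an explicit extension by lattice interpolation. Fix once and for all a Schwartz function $\phi$ on $\mathbb{R}^n$ obeying the nodal conditions $\phi(0)=1$ and $\phi(k)=0$ for $k\in\mathbb{Z}^n\setminus\{0\}$; a concrete choice is $\phi=\mathcal{F}^{-1}g$ where $g\in C_c^\infty(\mathbb{R}^n)$ has $\mathbb{Z}^n$-translates summing to $1$, which enforces the nodal conditions by Poisson summation and makes $\phi$ rapidly decreasing together with all its derivatives. Then set $a'(x,\xi):=\sum_{k\in\mathbb{Z}^n}a(x,k)\,\phi(\xi-k)$. Since $|a(x,k)|\lesssim\langle k\rangle^{m}$ and $\phi$ is Schwartz, this series and every series obtained by differentiating term by term converge absolutely and locally uniformly, so $a'$ is smooth; the nodal conditions give $a'(x,k)=a(x,k)$, and pure $x$-derivatives are handled at once, since the $\alpha=0$ case of \eqref{cs} bounds $\partial^\beta_x a(x,k)$ by $\langle k\rangle^{m+\delta|\beta|}$ and the rapid decay of $\phi$ then yields $|\partial^\beta_x a'(x,\xi)|\lesssim\langle\xi\rangle^{m+\delta|\beta|}$.

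The real content, and the main obstacle, is the $\xi$-estimate. Differentiating gives $\partial^\beta_x\partial^\alpha_\xi a'(x,\xi)=\sum_k(\partial^\beta_x a)(x,k)\,(\partial^\alpha_\xi\phi)(\xi-k)$, and bounding this crudely by the decay of $\partial^\alpha_\xi\phi$ produces only $\langle\xi\rangle^{m+\delta|\beta|}$, losing the gain $\langle\xi\rangle^{-\rho|\alpha|}$ demanded by \eqref{css}. To recover it I would perform a summation by parts in the $k$-variable, one step per $\xi$-derivative, via the discrete antiderivative identity $\sum_k b(k)\,g(\xi-k)=\sum_k(\Delta b)(k)\,G(\xi-k)$ with $G(u)=-\sum_{j\ge0}g(u+j)$. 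Here the kernel $g=\partial^\alpha_\xi\phi$ is a pure derivative, hence has vanishing integral, so $G$ is again rapidly decreasing; iterating $|\alpha|$ times coordinatewise transfers all the $\xi$-derivatives onto $a$ as difference operators and leads to $|\partial^\beta_x\partial^\alpha_\xi a'(x,\xi)|\lesssim\sum_k|\Delta^\alpha_\xi\partial^\beta_x a(x,k)|\,\langle\xi-k\rangle^{-N}$. Now \eqref{cs} supplies $|\Delta^\alpha_\xi\partial^\beta_x a(x,k)|\lesssim\langle k\rangle^{m-\rho|\alpha|+\delta|\beta|}$, and choosing $N$ large localizes the sum to $k$ near $\xi$, where $\langle k\rangle$ is comparable to $\langle\xi\rangle$, which is exactly \eqref{css}. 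It is precisely here that the hypothesis $\rho>0$ matters: it is the per-difference gain $\langle k\rangle^{-\rho}$ that matches the required $\langle\xi\rangle^{-\rho|\alpha|}$ and makes $a'$ a symbol of the same $(\rho,\delta)$-type. I would run the summation by parts first in dimension one, where it is transparent, and then pass to $\mathbb{Z}^n$ by iterating coordinatewise, exactly as in Theorem 4.5.3 of \cite{Ruz}.
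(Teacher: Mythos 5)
The paper offers no proof of its own for this proposition; it simply cites \cite[Theorem 4.5.3]{Ruz}, and your strategy (restriction controlled by writing differences as averages of derivatives; extension by lattice interpolation $a'(x,\xi)=\sum_k a(x,k)\phi(\xi-k)$ with a Schwartz cardinal function $\phi=\mathcal{F}^{-1}g$, followed by summation by parts to trade $\xi$-derivatives of the kernel for differences of $a$) is exactly the periodisation argument of that reference. The ``if'' direction and the convergence/interpolation/$x$-derivative parts of the ``only if'' direction are fine.

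There is, however, one step whose justification as written is wrong and would fail for a general admissible $g$: the claim that the discrete antiderivative $G(u)=-\sum_{j\ge0}g(u+j)$ of $g=\partial^\alpha_\xi\phi$ is rapidly decreasing ``because a pure derivative has vanishing integral.'' Vanishing of $\int g$ is the condition for the \emph{continuous} antiderivative $-\int_0^\infty g(u+t)\,dt$ to decay; for the discrete one, as $u\to-\infty$ along $u=s-M$ one has $G(u)\to-\sum_{\ell\in\mathbb{Z}}g(s+\ell)$, so what is needed is the vanishing of the \emph{periodisation} of $g$ along the relevant lattice direction, not of its integral. A Schwartz function can have zero integral and nonzero periodisation (e.g.\ $g=\psi-\psi(\cdot-\tfrac12)$), in which case $G$ does not decay and the localisation of the sum to $k\approx\xi$ collapses. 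The gap is fixable: the periodisation of $\phi$ equals $\sum_{k\in\mathbb{Z}^n}g(k)e^{2\pi i k\cdot s}$ by Poisson summation, so if you additionally require $\operatorname{supp}g\subset(-1,1)^n$ (compatible with the partition-of-unity normalisation) then $g(k)=0$ for all $k\neq 0$, the periodisation of $\phi$ is constant, and the periodisation of every $\partial^\alpha_\xi\phi$ with $|\alpha|\ge1$ vanishes, which restores the rapid decay of $G$ and of its iterates. Alternatively, and more cleanly, one can bypass the iterated antiderivatives entirely by the identity $\partial^\alpha_\xi\phi=\Delta^\alpha_\xi\psi_\alpha$ with $\psi_\alpha=\mathcal{F}\bigl(\bigl(\tfrac{-2\pi i x}{e^{-2\pi i x}-1}\bigr)^\alpha g\bigr)$ Schwartz (this is precisely Lemma 4.5.1 of \cite{Ruz}, and is where the support condition on $g$ enters); a single exact Abel summation then moves $\Delta^\alpha$ onto $a$. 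You should also note that the per-difference gain $\langle k\rangle^{-\rho}$ is used, but the argument does not actually require $\rho>0$ at this point; that hypothesis plays its role elsewhere in the toroidal calculus.
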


It is an interesting but non trivial fact that the definition of pseudo-differential operator on a torus \eqref{aa} given by Agranovich     and by H\"ormander \eqref{hh}  are equivalent. McLean  \cite{Mc} prove this for all  H\"ormander classes $S^m_{\rho,\delta}(\mathbb{T}^n\times \mathbb{Z}^n).$ A different proof to this fact can be found in \cite[Corollary 4.6.13]{Ruz}.

\begin{proposition}$\label{eqc}($Equality of Operators Classes$).$ For $0\leq \delta \leq 1,$ $0<\rho\leq 1,$ we have $\Psi^{m}_{\rho,\delta}(\mathbb{T}^n\times \mathbb{Z}^n)=\Psi^{m}_{\rho,\delta}(\mathbb{T}^n\times \mathbb{R}^n).$
\end{proposition}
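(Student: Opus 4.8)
The plan is to exploit the basic structural feature of the toroidal calculus: the quantization \eqref{aa}, $a\mapsto a(x,D_x)$, is a bijection between symbols $a(x,\xi)$ that are smooth in $x$ and defined for $\xi\in\mathbb{Z}^n$, and continuous linear operators $A:C^{\infty}(\mathbb{T}^n)\to C^{\infty}(\mathbb{T}^n)$, the inverse map being $a(x,\xi)=e^{-i2\pi\langle x,\xi\rangle}(Ae_\xi)(x)$, where $e_\xi(x):=e^{i2\pi\langle x,\xi\rangle}$. Since both sides of the claimed identity are classes of operators on $C^{\infty}(\mathbb{T}^n)$, the equality reduces to a single statement about symbols: an operator is of the form $p(X,D_x)$ with $p\in S^m_{\rho,\delta}(\mathbb{T}^n\times\mathbb{R}^n)$ if and only if its uniquely determined toroidal symbol belongs to $S^m_{\rho,\delta}(\mathbb{T}^n\times\mathbb{Z}^n)$. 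I would therefore prove the two inclusions separately.

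For $\Psi^m_{\rho,\delta}(\mathbb{T}^n\times\mathbb{R}^n)\subseteq\Psi^m_{\rho,\delta}(\mathbb{T}^n\times\mathbb{Z}^n)$, I start from $p\in S^m_{\rho,\delta}(\mathbb{T}^n\times\mathbb{R}^n)$, set $A:=p(X,D_x)$, and compute its toroidal symbol $a(x,\xi)=e^{-i2\pi\langle x,\xi\rangle}(Ae_\xi)(x)$ for $\xi\in\mathbb{Z}^n$. Inserting $u=e_\xi$ into \eqref{hh} and carrying out the $y$-integration over $\mathbb{T}^n$ yields an oscillatory-integral representation of $a(x,\xi)$ as an integral of $p(x,\cdot)$ against the torus Fourier kernel of $e_\xi$. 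The substance of this step is to verify the discrete H\"ormander estimates \eqref{calderon3}, i.e. to bound $\partial_x^\beta\Delta_\xi^\alpha a(x,\xi)$ by $C_{\alpha,\beta}\langle\xi\rangle^{m-\rho|\alpha|+\delta|\beta|}$. Here the natural device is a discrete Taylor/mean value argument that replaces each forward difference $\Delta_{\xi_j}$, applied to the smooth extension of the symbol, by a derivative $\partial_{\xi_j}$ evaluated at an intermediate point; this converts the required discrete bounds into the Euclidean estimates already satisfied by $p$, while the $x$-derivatives are handled by differentiating under the integral sign.

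For the reverse inclusion $\Psi^m_{\rho,\delta}(\mathbb{T}^n\times\mathbb{Z}^n)\subseteq\Psi^m_{\rho,\delta}(\mathbb{T}^n\times\mathbb{R}^n)$, I fix $a\in S^m_{\rho,\delta}(\mathbb{T}^n\times\mathbb{Z}^n)$ and invoke Proposition \ref{eq} to pick a Euclidean extension $\tilde a\in S^m_{\rho,\delta}(\mathbb{T}^n\times\mathbb{R}^n)$ with $\tilde a|_{\mathbb{T}^n\times\mathbb{Z}^n}=a$. The operator $\tilde a(X,D_x)$ lies in $\Psi^m_{\rho,\delta}(\mathbb{T}^n\times\mathbb{R}^n)$ and, by the inclusion just proved, is a toroidal operator; expanding the oscillatory integral of the previous paragraph by a Taylor expansion in the frequency variable shows that its toroidal symbol $b$ has leading term $\tilde a|_{\mathbb{T}^n\times\mathbb{Z}^n}=a$ and a remainder of strictly lower order. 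Thus $a(x,D_x)-\tilde a(X,D_x)=(a-b)(x,D_x)$ has toroidal symbol of order below $m$, and a successive-approximation (asymptotic summation) argument corrects the Euclidean representative order by order until the discrepancy is a smoothing operator, whose smooth kernel makes it simultaneously a toroidal and a Euclidean operator of every order. Adding this remainder back shows $a(x,D_x)\in\Psi^m_{\rho,\delta}(\mathbb{T}^n\times\mathbb{R}^n)$.

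The main obstacle is the symbol bookkeeping underlying both inclusions: proving the oscillatory-integral estimates in the first step and, above all, controlling the full asymptotic expansion that relates the toroidal and Euclidean symbols of one operator and summing it so that the residual error is genuinely smoothing. The crux is the systematic passage between the discrete differences $\Delta_\xi$, natural on $\mathbb{Z}^n$, and the continuous derivatives $\partial_\xi$, natural on $\mathbb{R}^n$: when $\rho>\delta$ the successive correction terms gain a factor $\langle\xi\rangle^{-(\rho-\delta)}$ and the expansion converges in the usual asymptotic sense, whereas the remaining values of $(\rho,\delta)$ require the more robust direct kernel comparison of McLean \cite{Mc}.
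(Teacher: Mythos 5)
The paper does not actually prove this proposition: it quotes it from McLean \cite{Mc} and from \cite[Corollary 4.6.13]{Ruz}, so there is no internal argument to compare yours against line by line. Your outline reconstructs the standard proof from those references --- reduce to a statement about the uniquely determined toroidal symbol $a(x,\xi)=e^{-i2\pi\langle x,\xi\rangle}(Ae_\xi)(x)$, prove one inclusion by oscillatory-integral estimates together with a discrete Taylor (mean value) argument trading $\Delta_\xi$ for $\partial_\xi$, and the other by extending via Proposition \ref{eq} and comparing quantizations --- and as a plan this is the right shape.

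There is, however, one genuine gap you should not paper over. Your second inclusion rests on an asymptotic expansion whose correction terms gain $\langle\xi\rangle^{-(\rho-\delta)}$ per step, so the successive-approximation argument only closes when $\rho>\delta$; the proposition is asserted for all $0\leq\delta\leq1$, $0<\rho\leq1$, including $\delta\geq\rho$, and for that range your stated remedy is to invoke McLean --- i.e., to cite the theorem you are proving. The way the cited sources avoid this entirely is worth knowing: the extension $\tilde a$ in Proposition \ref{eq} can be \emph{chosen} (by the periodisation construction $\tilde a(x,\xi)=\sum_{k\in\mathbb{Z}^n}\widehat{\theta}(\xi-k)\,a(x,k)$ with $\widehat{\theta}$ vanishing on $\mathbb{Z}^n\setminus\{0\}$ and $\widehat{\theta}(0)=1$) so that $\tilde a(X,D_x)=a(x,D_x)$ holds \emph{exactly}, with no lower-order discrepancy to sum away. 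That makes the inclusion $\Psi^{m}_{\rho,\delta}(\mathbb{T}^n\times\mathbb{Z}^n)\subseteq\Psi^{m}_{\rho,\delta}(\mathbb{T}^n\times\mathbb{R}^n)$ immediate for the full range of $(\rho,\delta)$ and eliminates the asymptotic summation from your argument; the only analytic work left is then the estimate $\partial_x^\beta\Delta_\xi^\alpha a$ versus $\partial_x^\beta\partial_\xi^\alpha\tilde a$ in your first inclusion, which your discrete mean value device handles correctly.
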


In our further analysis it will be useful record the following fact: there exists a bijection  
\begin{equation}
i^*: S^{m}_{\rho,\delta}(\mathbb{T}^n\times \mathbb{Z}^n)\rightarrow S^{m}_{\rho,\delta}(\mathbb{T}^n\times \mathbb{R}^n)
\end{equation}
which associates  every symbol $\sigma\in S^{m}_{\rho,\delta}(\mathbb{T}^n\times \mathbb{Z}^n)$ to  an Euclidean symbol $ i^*\sigma\in S^{m}_{\rho,\delta}(\mathbb{T}^n\times \mathbb{R}^n)$ such that 
\begin{equation}
 \sigma(x,D_x)f(x)=(i^*\sigma)(X,D_x)f(x),\,\,\,f\in C^\infty(\mathbb{T}^n).
\end{equation} The existence of the mapping mapping  $i^*$ in \eqref{Eq:asym-homgtorus} is a natural consequence of Proposition \ref{eqc}. This can be constructed explicitly in terms of the periodisation technique developed by Ruzhansky and Turunen in \cite[Section 4.2]{Ruz}. 
\begin{definition}[Classical symbols on $\mathbb{T}^n$] A classical pseudo-differential operator $A=a(X,D_x)$ on $\mathbb{T}^n$ is an operator with symbol  $a\in S^{m}_{1,0}(\mathbb{T}^n\times\mathbb{R}^n)$, admitting an asymptotic expansion 
\begin{equation}\label{Eq:asym-homgtorus}
a(x,\xi)\sim \sum_{j=0}^{\infty} a_{m-j}(x,\xi)
\end{equation}
where $a_{m-j}(x,\xi)$ are homogeneous functions with respect to $\xi$ of degree $m-j$ for $\xi$ far from to  zero. 
\end{definition}
The set of classical symbols of order $m$ on $\mathbb{T}^n$ will be denoted  by $S^m_{cl}(\mathbb{T}^n\times \mathbb{R}^n)$ and the corresponding class of periodic pseudo-differential operators will be denoted by $\Psi^m_{cl}(\mathbb{T}^n)$.

Now, with the help of the classical symbols on $\mathbb{T}^n$, we introduce classical pseudo-differential operators on $\mathbb{Z}^n$. We start with the following definition of the classical symbols of $\mathbb{Z}^n$.
    \begin{definition}[Classical symbols on $\mathbb{Z}^n$]\label{CZN} A symbol $\sigma\in S^{m}_{1,0}(\mathbb{Z}^n\times \mathbb{T}^n)$ is called classical, if the symbol $\tau(x,k):=\overline{\sigma(-k,x)}$ satisfies that $i^*\tau\in S^m_{cl}(\mathbb{T}^n\times \mathbb{R}^n). $ \end{definition}
The set of classical symbols of order $m$ on $\mathbb{Z}^n$ will be denoted  by $S^m_{cl}(\mathbb{Z}^n\times \mathbb{T}^n)$ and the corresponding class of periodic pseudo-differential operators will be denoted by $\Psi^m_{cl}(\mathbb{Z}^n)$. 

Periodic and also discrete H\"ormander classes of pseudo-differential operators are closed under compositions, adjoints and parametrices. We refer the reader to  \cite{Ruzhansky,Ruz,Ruz-2} for details.

\section{Dixmier traces for pseudo-differential operators on $\mathbb{Z}^n$}\label{proof}
In this section, we present our main result. We begin this section by recalling some basic facts on Dixmier trace and related results. 

Let  $H$ be a Hilbert space and let $\mathcal{L}(H)$ be the algebra of all bounded linear operators on $H.$ We say that a  linear operator $A \in \mathcal{L}(H)$ is in the Dixmier class $\mathcal{L}^{(1,\infty)}(H)$ if 
\begin{equation}
\sum_{1\leq n\leq N}s_{n}(A)=O(\log(N)),\,\,\,N\rightarrow \infty,
\end{equation}
where $s(A):=\{s_{n}(A)\}_n$ denotes the sequence of singular values of $A$,  i.e. the square roots of the eigenvalues of the non-negative self-adjoint operator $A^\ast A$, and $s_{n}(A)$  are arranged in increasing order.
So, $\mathcal{L}^{(1,\infty)}(H)$ is endowed with the norm
\begin{equation}\label{dixmier}
\Vert A \Vert_{\mathcal{L}^{(1,\infty)}(H)}=\sup_{N\geq 2}\frac{1}{\log(N)}\sum_{1\leq n\leq N}s_{n}(A).
\end{equation}
The Dixmier trace $\textnormal{Tr}_{\omega}$ of a positive operator $A$ in $\mathcal{L}^{(1,\infty)}(H)$ can be formulated by using  an average function $\omega$ of the following form
$$\textnormal{Tr}_{\omega}(A)= \lim _{\omega }\frac{1}{\log(N)}\sum_{1\leq n\leq N}s_{n}(A),$$
where $\lim_{\omega}$ is  a positive linear functional on $l^{\infty}(\mathbb{N})$, the set of bounded sequences, satisfying: 
\begin{itemize}
\item $\lim_{\omega}(\alpha_n)\geq 0$ for all $\alpha_n\geq 0$;
\item $\lim_{\omega}(\alpha_n) = \lim(\alpha_n)$ whenever the ordinary limit exists;
\item $\lim_{\omega}(\alpha_1,\alpha_1, \alpha_2, \alpha_2,\alpha_3,\alpha_3\dots) = \lim_{w}(\alpha_n)$.
\end{itemize}
The functional $\textnormal{Tr}_{\omega}$ can be defined for positive operators, and later extended to the whole ideal $\mathcal{L}^{(1,\infty)}(H)$ by linearity (in this case $\textnormal{Tr}_\omega$ is not necessarily a positive functional). 

Let us assume that $M$ is a closed manifold. In the framework of noncommutative geometry, a remarkable result due to  Connes shows that any classical pseudo-differential operator, of order of the minus of the dimension of the manifold underliying, acting on $L^2(M)$ lies to the Dixmier measurable class, and moreover the Dixmier trace coincides with the noncommutative residue,  which is the unique trace (up to by factors) in the algebra of pseudo-differential operators with classical symbols on a closed manifold.

Now, we recall that  for a  compact manifold $M$ of dimension $\varkappa$ without boundary, a classical pseudo-differential operator $A$ on $M$ of order $m$, is defined by means of local symbols, this means that for any local chart $U$, the operator $A$ reads
$$Au(x)=\int\limits_{T^{*}_xU} e^{ix\xi}\sigma^A(x,\xi)\widehat{u}(\xi)\, d\xi$$
where  $\sigma^A(x,\xi)$  is a smooth function on $T^{*} U\cong U\times\mathbb{R}^n,$ $T^{*} _{x}U=\mathbb{R}^n$,  and admits an asymptotic expansion 
\begin{equation}\label{Eq:asym-homg}
\sigma^A(x,\xi)\sim \sum_{j=0}^{\infty}\sigma^{A}_{m-j}(x,\xi)
\end{equation}
where $\sigma^A_{m-j}(x,\xi)$ are homogeneous functions with respect to $\xi$ of degree $m-j$ for $\xi$ far away from the section zero. The set of classical pseudo-differential operators of order $m$ is denoted by $\Psi^m_{cl}(M)$. For $A\in\Psi_{cl}(M)$, for $x\in M$, $\int\limits_{|\xi|=1}\sigma_{-\varkappa}(x,\xi)\, d\Sigma(\xi),$ defines a local density which can be glued over $M$. So, a linear functional called the noncommutative residue is defined by 
\begin{equation}\label{resi}
\textnormal{res}\,(A)=\frac{1}{\varkappa(2\pi)^\varkappa}\int\limits_{M}\int\limits_{| \xi|=1}\sigma^{A}_{-\varkappa}(x,\xi)\, d\Sigma(\xi)\,d_{\textnormal{Vol}}x.
\end{equation}
which vanishes on non-integer order classical operators.  

In order to provide a trace formula for pseudo-differential operators on $\mathbb{Z}^n$ we will use the following identity in \cite{Ruzhansky}
\begin{equation}
t_\sigma=\mathscr{F}_{\mathbb{Z}^n}(\tau(x,D_x)^*)\mathscr{F}_{\mathbb{Z}^n}^{-1},
\end{equation}
and the following fundamental  result due to Connes \cite{Connes}.

\begin{theorem}[A. Connes]\label{connestheorem}
Let $M$ be a closed manifold of dimension $\varkappa.$ Then every classical and positive pseudo-differential operator $A$ of order $-\varkappa$ lies in $\mathcal{L}^{(1,\infty)}{(L^2(M))}$ and 
\begin{equation}
\textnormal{Tr}_{w}(A)=\textnormal{res}(A).
\end{equation}
\end{theorem}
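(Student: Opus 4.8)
The plan is to deduce the theorem from the sharp Weyl-type asymptotics of the singular values of $A$ and then to feed those asymptotics directly into the defining formula \eqref{dixmier} for the Dixmier functional. Since $A$ is positive and of negative order $-\varkappa$, it maps $L^2(M)$ continuously into $H^{\varkappa}(M)$, which embeds compactly into $L^2(M)$; hence $A$ is a compact positive self-adjoint operator and its singular values $s_n(A)$ coincide with its eigenvalues. Positivity also forces the principal symbol $\sigma^A_{-\varkappa}(x,\xi)$ to be strictly positive for $\xi\neq 0$, so $A$ is elliptic and $A^{-1}$ is a positive elliptic classical $\Psi$DO of order $\varkappa$ on $M$. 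I would then invoke the Weyl law for elliptic operators (equivalently, Seeley's complex powers $A^s$ of order $-\varkappa s$ and the meromorphic continuation of $\zeta_A(s)=\textnormal{Tr}(A^s)$ with its simple pole at $s=1$), which in counting-function form reads
\[ N(\lambda) := \#\{\, n : s_n(A) > \lambda \,\} \sim \frac{1}{(2\pi)^{\varkappa}}\,\textnormal{Vol}\bigl\{(x,\xi)\in T^*M : \sigma^A_{-\varkappa}(x,\xi) > \lambda \bigr\}, \quad \lambda \to 0^+. \]
Establishing this asymptotic with its \emph{exact} leading constant, as a genuine limit rather than a mere $O$-estimate, is the step I expect to be the main obstacle, because it is simultaneously what guarantees measurability of $A$ and what pins down the value of the trace.

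Granting the spectral asymptotics, the identification of the constant with the noncommutative residue is then an elementary homogeneity computation. Since $\sigma^A_{-\varkappa}(x,\cdot)$ is homogeneous of degree $-\varkappa$, writing $\xi=r\omega$ with $\omega\in\mathbb{S}^{\varkappa-1}$ turns the condition $\sigma^A_{-\varkappa}(x,r\omega)>\lambda$ into $r<(\sigma^A_{-\varkappa}(x,\omega)/\lambda)^{1/\varkappa}$, and integrating $r^{\varkappa-1}\,dr$ over this range produces a factor $\tfrac{1}{\varkappa\lambda}\,\sigma^A_{-\varkappa}(x,\omega)$. Carrying out the remaining integrations over the sphere and over $M$ yields
\[ N(\lambda) \sim \frac{1}{\lambda}\cdot\frac{1}{\varkappa(2\pi)^{\varkappa}}\int\limits_{M}\int\limits_{|\xi|=1}\sigma^A_{-\varkappa}(x,\xi)\,d\Sigma(\xi)\,d_{\textnormal{Vol}}x = \frac{\textnormal{res}(A)}{\lambda}, \]
by the definition \eqref{resi} of $\textnormal{res}(A)$. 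Inverting the counting function gives $s_n(A)\sim \textnormal{res}(A)/n$, and it is precisely the fact that this is an honest asymptotic equivalence that makes $A$ measurable.

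Finally I would conclude by inserting this into the Dixmier functional. From $s_n(A)\sim \textnormal{res}(A)/n$ one obtains $\sum_{n\le N}s_n(A)\sim \textnormal{res}(A)\sum_{n\le N} n^{-1}\sim \textnormal{res}(A)\log N$, so in particular $\sum_{n\le N}s_n(A)=O(\log N)$ and hence $A\in\mathcal{L}^{(1,\infty)}(L^2(M))$. Dividing by $\log N$, the Cesàro means $\tfrac{1}{\log N}\sum_{n\le N}s_n(A)$ converge to $\textnormal{res}(A)$; since the ordinary limit exists, the second and third defining properties of $\lim_\omega$ force every invariant mean to return this same value, so that $\textnormal{Tr}_{\omega}(A)=\textnormal{res}(A)$ independently of the choice of $\omega$, which is the asserted equality.
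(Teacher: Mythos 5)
First, a point of comparison: the paper does not prove this statement at all --- it is imported as a known theorem of Connes, cited to \cite{Connes}, so there is no internal argument to measure your proposal against. On its own terms, your sketch follows the standard Weyl-law route: the homogeneity computation identifying the leading term of $N(\lambda)$ with $\textnormal{res}(A)/\lambda$ is correct, and the final passage from $s_n(A)\sim \textnormal{res}(A)/n$ to $\sum_{n\leq N}s_n(A)\sim \textnormal{res}(A)\log N$ and thence to the ($\omega$-independent) value of $\textnormal{Tr}_\omega(A)$ is also correct, granting --- as you explicitly do --- the sharp Weyl asymptotics as a black box.

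The genuine gap is the sentence ``positivity also forces the principal symbol $\sigma^A_{-\varkappa}(x,\xi)$ to be strictly positive for $\xi\neq 0$, so $A$ is elliptic.'' Positivity of the operator only yields $\sigma^A_{-\varkappa}\geq 0$, not strict positivity: if $B$ is a classical operator of order $-\varkappa/2$ whose principal symbol vanishes on an open cone, then $A=B^{*}B$ is a positive classical operator of order $-\varkappa$ which is not elliptic. For such $A$ the machinery you invoke is unavailable --- there is no parametrix or inverse $A^{-1}$, Seeley's complex powers and the meromorphic continuation of $\zeta_A$ do not apply, and the pointwise asymptotic $s_n(A)\sim \textnormal{res}(A)/n$ can fail; in general one only obtains convergence of the logarithmic means $\frac{1}{\log N}\sum_{n\leq N}s_n(A)$, which is exactly what measurability requires but is strictly weaker than what your argument produces. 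Since the theorem assumes only positivity, you need an additional step to cover the non-elliptic case. One clean fix: fix a positive \emph{elliptic} $Q\in\Psi^{-\varkappa}_{cl}(M)$ (e.g.\ $Q=(1+\Delta_g)^{-\varkappa/2}$); then $A+\epsilon Q$ is positive and elliptic, your argument applies to it, the domination $0\leq A\leq A+\epsilon Q$ gives $s_n(A)\leq s_n(A+\epsilon Q)=O(1/n)$ and hence $A\in\mathcal{L}^{(1,\infty)}(L^2(M))$, and linearity of $\textnormal{Tr}_\omega$ on the ideal together with $\textnormal{res}(A+\epsilon Q)=\textnormal{res}(A)+\epsilon\,\textnormal{res}(Q)$ yields $\textnormal{Tr}_\omega(A)=\textnormal{res}(A)$. (Connes' original argument instead avoids eigenvalue asymptotics for general $A$ by showing that $\textnormal{Tr}_\omega$ defines a trace on the relevant symbol algebra and appealing to Wodzicki's uniqueness of the residue trace, normalizing on a single elliptic example.)
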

\begin{corollary}\label{connestheorem2}
Let $\tau(x,D_x)$ be a classical positive pseudo-differential operator with symbol  $\tau\in S^{-n}_{1,0}(\mathbb{T}^n\times \mathbb{R}^n).$  Then $\tau(x, D_x)$ belongs to $\mathcal{L}^{(1,\infty)}{(L^2(\mathbb{T}^n))}$ and 
\begin{equation}
\textnormal{Tr}_{w}(\tau(x,D_x))=\textnormal{res}(\tau(x, D_x)).
\end{equation}
\end{corollary}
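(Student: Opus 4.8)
The plan is to derive Corollary \ref{connestheorem2} directly from Connes' Theorem \ref{connestheorem} by specializing the closed manifold $M$ to the torus $\mathbb{T}^n$. First I would observe that $\mathbb{T}^n=(\mathbb{R}/\mathbb{Z})^n$ is a closed (compact, boundaryless) manifold of dimension $\varkappa=n$, so the general hypothesis ``$M$ closed of dimension $\varkappa$'' is satisfied with $\varkappa=n$. The operator $\tau(x,D_x)$ is assumed to have symbol $\tau\in S^{-n}_{1,0}(\mathbb{T}^n\times\mathbb{R}^n)$, and the whole point of the preliminaries (Proposition \ref{eqc} and the Definition of classical symbols on $\mathbb{T}^n$) is to guarantee that such an operator is a genuine classical pseudo-differential operator on the torus in the sense required by Connes' theorem. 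Thus the bulk of the argument is an identification: I must check that a periodic operator of order $-n$ with a classical symbol is exactly an element of $\Psi^{-n}_{cl}(\mathbb{T}^n)=\Psi^{-n}_{cl}(M)$ for $M=\mathbb{T}^n$.

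The key steps, in order, are as follows. First I would invoke Proposition \ref{eqc} to pass between the two equivalent quantizations \eqref{hh} and \eqref{aa}, so that the toroidal operator $\tau(x,D_x)$ is recognized as a H\"ormander-class operator on the manifold $\mathbb{T}^n$; the hypothesis $\tau\in S^{-n}_{1,0}$ places it in $\Psi^{-n}_{1,0}(\mathbb{T}^n\times\mathbb{R}^n)$. Second, since $\tau$ is assumed classical, its symbol admits the homogeneous asymptotic expansion \eqref{Eq:asym-homgtorus}, matching the local-chart expansion \eqref{Eq:asym-homg} that Connes' framework requires; on the torus the trivial cover by charts makes the globally defined symbol $\tau(x,\xi)$ coincide with the local symbols $\sigma^A(x,\xi)$, so no gluing subtleties arise. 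Third, the operator is positive and of order exactly $-n=-\varkappa$ by hypothesis. Having verified these three conditions, I would apply Theorem \ref{connestheorem} verbatim with $M=\mathbb{T}^n$ and $\varkappa=n$, concluding that $\tau(x,D_x)\in\mathcal{L}^{(1,\infty)}(L^2(\mathbb{T}^n))$ and that $\textnormal{Tr}_\omega(\tau(x,D_x))=\textnormal{res}(\tau(x,D_x))$, which is precisely the claimed identity.

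The step I expect to require the most care is the identification that a periodic symbol in $S^{-n}_{1,0}(\mathbb{T}^n\times\mathbb{R}^n)$ with a homogeneous expansion genuinely qualifies as a classical operator on $\mathbb{T}^n$ in Connes' sense, rather than merely satisfying H\"ormander estimates. Concretely, I would want to confirm that the homogeneity of the components $a_{m-j}(x,\xi)$ for large $|\xi|$ (as in the Definition of classical symbols on $\mathbb{T}^n$) transports to the local-chart homogeneity \eqref{Eq:asym-homg} used in defining the noncommutative residue \eqref{resi}. Because $\mathbb{T}^n$ admits an atlas of charts on which the cotangent fibre is canonically $\mathbb{R}^n$ and the transition maps are translations (hence act trivially on the fibre coordinate $\xi$), the periodic symbol and the local symbols agree on each chart, and the local density $\int_{|\xi|=1}\sigma_{-n}(x,\xi)\,d\Sigma(\xi)$ is well defined and glues globally. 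Once this compatibility is in place, Connes' theorem applies directly and the corollary follows with no further computation.
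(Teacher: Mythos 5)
Your argument is correct and coincides with the paper's (implicit) justification: the corollary is obtained there simply by applying Theorem \ref{connestheorem} with $M=\mathbb{T}^n$ and $\varkappa=n$, using the equivalence of the toroidal and H\"ormander quantizations to recognize $\tau(x,D_x)$ as a classical operator of order $-n$ on the closed manifold $\mathbb{T}^n$. Your extra care about matching the global periodic symbol with the local-chart symbols is a sensible elaboration of the same route, not a different one.
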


The following theorem is our main result on the Dixmier traceability of pseudo-differential operators on $\mathbb{Z}^n$. 

\begin{theorem}\label{mainTheorem}
Let us assume that $t_\sigma\in \Psi^{-n}_{cl}(\mathbb{Z}^n)$ is a positive  discrete pseudo-differential operator.  Then the operator $t_\sigma$ is Dixmier traceable on $\ell^2(\mathbb{Z}^n)$ and its Dixmier trace is given by
\begin{equation}
\textnormal{Tr}_\omega(t_\sigma)= \frac{1}{n(2\pi)^n} \int\limits_{\mathbb{T}^n}\int\limits_{\mathbb{S}^{n-1}}[i^*(\tau)(x,\xi)]_{(-n)}d\Sigma(\xi) d_{\textnormal{Vol}}x,\,\,\tau(x,k):=\overline{\sigma(-k,x)},
\end{equation} where $[i^*(\tau)(x,\xi)]_{(-n)}$ in the homogeneous term of order $-n$ is the asymptotic expansion of the symbol $i^*(\tau)(x,\xi)$ in homogeneous components as in \eqref{Eq:asym-homgtorus}.
\end{theorem}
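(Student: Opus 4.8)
The plan is to transfer the whole problem from $\ell^2(\mathbb{Z}^n)$ to $L^2(\mathbb{T}^n)$ by means of the identity
\begin{equation}
t_\sigma=\mathscr{F}_{\mathbb{Z}^n}\bigl(\tau(x,D_x)^*\bigr)\mathscr{F}_{\mathbb{Z}^n}^{-1},\qquad \tau(x,k):=\overline{\sigma(-k,x)},
\end{equation}
recorded from \cite{Ruzhansky}. The toroidal Fourier transform $\mathscr{F}_{\mathbb{Z}^n}\colon L^2(\mathbb{T}^n)\to \ell^2(\mathbb{Z}^n)$ is unitary by Plancherel's theorem, so this identity exhibits $t_\sigma$ as unitarily equivalent to the operator $B:=\tau(x,D_x)^*$ acting on $L^2(\mathbb{T}^n)$. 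First I would record the two structural consequences of this unitary equivalence: singular values are unitary invariants, so $s_j(t_\sigma)=s_j(B)$ for every $j$, whence $t_\sigma\in\mathcal{L}^{(1,\infty)}(\ell^2(\mathbb{Z}^n))$ if and only if $B\in\mathcal{L}^{(1,\infty)}(L^2(\mathbb{T}^n))$, and in that case $\textnormal{Tr}_\omega(t_\sigma)=\textnormal{Tr}_\omega(B)$; moreover positivity is preserved, so $B$ is a positive operator on $L^2(\mathbb{T}^n)$.

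The next step is to identify $B$ as a classical periodic operator to which Corollary \ref{connestheorem2} applies. Since $B=\tau(x,D_x)^*$ is positive it is self-adjoint, hence $B=B^*=\tau(x,D_x)^{**}=\tau(x,D_x)$, the last equality being legitimate because $\tau(x,D_x)$ has order $-n\le 0$ and is therefore bounded (closed) on $L^2(\mathbb{T}^n)$. Thus $B=\tau(x,D_x)$ is itself a positive toroidal operator. Because $\sigma$ is classical, Definition \ref{CZN} gives $i^*\tau\in S^{-n}_{cl}(\mathbb{T}^n\times\mathbb{R}^n)$, and by Proposition \ref{eqc} together with the intertwining map $i^*$ we have $\tau(x,D_x)=(i^*\tau)(X,D_x)$. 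Hence $B$ is a classical, positive pseudo-differential operator on the closed manifold $\mathbb{T}^n$ of order $-n=-\dim\mathbb{T}^n$, with Euclidean symbol $i^*\tau$.

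It then remains to invoke Connes' theorem. Applying Corollary \ref{connestheorem2} (the case $\varkappa=n$, $M=\mathbb{T}^n$) yields $B\in\mathcal{L}^{(1,\infty)}(L^2(\mathbb{T}^n))$, which by the previous paragraph forces $t_\sigma\in\mathcal{L}^{(1,\infty)}(\ell^2(\mathbb{Z}^n))$, and
\begin{equation}
\textnormal{Tr}_\omega(B)=\textnormal{res}\bigl(\tau(x,D_x)\bigr)
=\frac{1}{n(2\pi)^n}\int\limits_{\mathbb{T}^n}\int\limits_{\mathbb{S}^{n-1}}[i^*(\tau)(x,\xi)]_{(-n)}\,d\Sigma(\xi)\,d_{\textnormal{Vol}}x,
\end{equation}
where $[i^*(\tau)(x,\xi)]_{(-n)}$ is the homogeneous component of degree $-n$ in the expansion \eqref{Eq:asym-homgtorus}, as prescribed by the residue formula \eqref{resi}. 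Combining this with $\textnormal{Tr}_\omega(t_\sigma)=\textnormal{Tr}_\omega(B)$ from the unitary equivalence produces the claimed formula.

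The main obstacle I anticipate is the treatment of the adjoint. A priori the operator delivered by the intertwining identity is $\tau(x,D_x)^*$ rather than $\tau(x,D_x)$, and the residue only sees the homogeneous term of order $-n$; one must make sure that the component entering the trace is that of $i^*\tau$ itself and not of the symbol of the adjoint, whose full expansion differs by the lower-order terms $\sum_{\alpha}\frac{1}{\alpha!}\partial_\xi^\alpha D_x^\alpha\overline{i^*\tau}$. The positivity hypothesis is exactly what dissolves this difficulty: it forces $B=B^*=\tau(x,D_x)$, so the adjoint correction never appears and the relevant homogeneous term is genuinely $[i^*\tau]_{(-n)}$. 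Care is also needed to confirm that the unitary conjugation is exact, so that the full singular-value sequence—and hence the Dixmier-class membership and the precise value of $\textnormal{Tr}_\omega$—transfers without error between $\ell^2(\mathbb{Z}^n)$ and $L^2(\mathbb{T}^n)$.
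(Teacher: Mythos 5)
Your proposal is correct and follows essentially the same route as the paper: conjugation by the unitary toroidal Fourier transform to reduce to $\tau(x,D_x)^*$ on $L^2(\mathbb{T}^n)$, use of positivity to identify $\tau(x,D_x)^*$ with $\tau(x,D_x)=(i^*\tau)(X,D_x)$, and then an application of Connes' theorem via Corollary \ref{connestheorem2} to obtain the residue formula. Your closing remark on why positivity eliminates the adjoint's lower-order symbol corrections is a slightly more explicit articulation of a point the paper handles the same way.
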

\begin{proof}
Let us consider the following identity due to Botchway, Kibiti and Ruzhansky \cite{Ruzhansky}
\begin{equation}
t_\sigma=\mathscr{F}_{\mathbb{Z}^n}(\tau(x,D_x)^*)\mathscr{F}_{\mathbb{Z}^n}^{-1}.
\end{equation}
First we will prove that the set of the singular values of $t_\sigma$ agrees with the set of singular values of $\tau(x,D_x)^*$. We show this by using the fact that the spectrum is stable under unitary transformations. Since  the Fourier transform on $\mathbb{Z}^n$ is a unitary operator, we have that 
\begin{align*}
s(\tau(x,D_x)^*) &=\{\sqrt{\lambda}:\lambda\in \textnormal{Spectrum}(\tau(x,D_x) \tau(x,D_x)^*)\}\\&=\{\sqrt{\lambda}:\lambda\in\textnormal{Spectrum}( \mathscr{F}_{\mathbb{Z}^n}\tau(x,D_x) \tau(x,D_x)^* \mathscr{F}_{\mathbb{Z}^n}^{-1})\}\\
&=\{\sqrt{\lambda}:\lambda\in\textnormal{Spectrum}( \mathscr{F}_{\mathbb{Z}^n} \tau(x,D_x) \mathscr{F}_{\mathbb{Z}^n}^{-1}  \mathscr{F}_{\mathbb{Z}^n}   \tau(x,D_x)^* \mathscr{F}_{\mathbb{Z}^n}^{-1})\}\\
&=\{\sqrt{\lambda}:\lambda\in\textnormal{Spectrum}( \mathscr{F}_{\mathbb{Z}^n} \tau(x,D_x) \mathscr{F}_{\mathbb{Z}^n}^{-1}  \circ t_\sigma)\}\\
&=\{\sqrt{\lambda}:\lambda\in\textnormal{Spectrum}( t_\sigma^* t_\sigma)\}=s(t_\sigma),
\end{align*}
where in the last line we have used $t_\sigma^*=\mathscr{F}_{\mathbb{Z}^n} \tau(x,D_x) \mathscr{F}_{\mathbb{Z}^n}^{-1} .$ Now, it is clear that $t_\sigma $ is Dixmier traceable if and only if $\tau(x,D_x)^*$ is Dixmier traceable and in a such case $\textnormal{Tr}_{\omega}(t_\sigma)=\textnormal{Tr}_{\omega}(\tau(x,D_x)^*)$.
The positivity of $t_\sigma$ implies that $\tau(x,D_x)$ is also positive. Because $t_\sigma \in \Psi^{-n}_{cl}(\mathbb{Z}^n)$ we have that $\tau(x,D_x) \in \Psi^{-n}_{cl}(\mathbb{T}^n)$ and consequently  $\tau(x,D_x) $ is a compact operator on $L^{2}(\mathbb{T}^n),$ (see e.g.  \cite{Ruzhansky}, also \cite{s1} for $n=1$ ). In particular $\tau(x,D_x) $ is bounded and   taking into account its positivity we conclude that $\tau(x,D_x)$ is self-adjoint. So, if we assume the Dixmier traceability of $\tau(x,D_x)=\tau(x,D_x)^*, $ we deduce  $\textnormal{Tr}_{\omega}(\tau(x,D_x)^*)=\textnormal{Tr}_{\omega}(\tau(x,D_x)).$ Since $\tau(x,D_x) \in \Psi^{-n}_{cl}(\mathbb{T}^n)$ by the Connes Theorem \ref{connestheorem} in the form of Corollary \ref{connestheorem2}, we conclude that $\tau(x,D_x)$ is Dixmier traceable and its Dixmier trace can be computed from its noncommutative residue. So, we have
\begin{align*}
\textnormal{Tr}_{\omega}(\tau(x,D_x)) &=\textnormal{Tr}_{\omega}((i^{*}\tau)(X,D_x))=\textnormal{res}((i^{*}\tau)(X,D_x)\\
&= \frac{1}{n(2\pi)^n} \int\limits_{\mathbb{T}^n}\int\limits_{\mathbb{S}^{n-1}}[i^*(\tau)(x,\xi)]_{(-n)}d\Sigma(\xi)\,d_{\textnormal{Vol}}x.
\end{align*}
Consequently we obtain
\begin{equation}
\textnormal{Tr}_\omega(t_\sigma)= \frac{1}{n(2\pi)^n} \int\limits_{\mathbb{T}^n}\int\limits_{\mathbb{S}^{n-1}}[i^*(\tau)(x,\xi)]_{(-n)}d\Sigma(\xi)\,d_{\textnormal{Vol}}x.
\end{equation}
Thus, we finish the proof of the theorem.
\end{proof}

{\bf Acknowledgements.} This work was started when Duv\'an Cardona was supported by Pontificia Universidad Javeriana. C\'esar del Corral was supported by Universidad de Los Andes and Vishvesh Kumar was supported by CSIR India. 

Currently, Duv\'an Cardona and Vishvesh Kumar are supported by FWO Odysseus 1 grant G.0H94.18N: Analysis and Partial Differential Equations of Prof. Michael Ruzhansky. They thank Prof. Michael Ruzhansky for his support and encouragement.

Es con mucho aprecio que dedicamos \'este  trabajo  al Profesor L\'azaro Recht, uno de los mejores matem\'aticos argentinos. Su contribuci\'on a las generaciones matem\'aticas j\'ovenes, en Argentina, Venezuela y Colombia ha sido determinante, a\'unque \'el, modestia aparte, opine lo contrario.   
\bibliographystyle{amsplain}

\begin{thebibliography}{99}
\bibitem{ag} Agranovich, M. S.: {Spectral properties of elliptic pseudodifferential operators on a closed curve} Funct. Anal. Appl.  \textbf{13}, 279-281 (1971)
 



\bibitem{Ruzhansky} Botchway L., Kibiti G., Ruzhansky M., Difference equations and pseudo-differential operators on $\mathbb{Z}^n$, arXiv:1705.07564.

\bibitem{CalderonZygmund}  Calder\'on, A. P., Zygmund, A. On the existence of certain singular integrals, Acta.
Math. 88 (1952), 85--139.


 


J. Pseudo-Differ. Oper. Appl., 5(4), (2014) 507-515.



\bibitem{Cd2} Cardona, D., Del Corral, C. The Dixmier trace and the non-commutative residue for multipliers on compact manifolds, submitted. arXiv:1703.07453. 


\bibitem{KC2018}Cardona, D., Kumar, V., $L^p$-boundedness and $L^p$-nuclearity of multilinear pseudo-differential operators on $\mathbb{Z}^n$ and the torus $\mathbb{T}^n$, J. Fourier Anal. Appl. DOI: 10.1007/s00041-019-09689-7, arXiv:1809.08380

\bibitem{cardonazn} Cardona, D. Pseudo-differential operators on $\mathbb{Z}^n$ with applications to discrete fractional integral operators. Bull. Iran. Math. Soc. To appear. DOI: 10.1007/s41980-018-00195-y.  arXiv:1803.00231.  

\bibitem{Cat14} Catana, V. $L^p$-boundedness of multilinear pseudo-differential operators on $\mathbb{Z}^n$ and $\mathbb{T}^n.$
Math. Model. Nat. Phenom., 9(5), (20014), 17--38.

\bibitem{Connes} Connes A. Noncommutative geometry. Academic Press Inc., San Diego, CA (1994)

\bibitem{Connes88}  Connes, A. The action functional in noncommutative geometry, Comm. Math.
Phys. 117, no. 4, 673--683, 1988.

\bibitem{Aparajita19} Dasgupta, A. and Kumar, V.: Ellipticity and Fredholmness of pseudo-differential operators on $\ell^2(\mathbb{Z}^n),$ (2019).  arXiv:1910.05582v2



\bibitem{DW} Delgado, J., Wong, M.W.: $L^p$-nuclear pseudo-differential operators on $\mathbb{Z}$ and $\mathbb{S}^1.,$  Proc. Amer. Math. Soc., 141(11), (2013), 3935--394.

\bibitem{DR} Delgado, J. and Ruzhansky, M.:  $L^p$-nuclearity, traces, and Grothendieck-Lidskii formula on compact Lie groups. {\it J. Math. Pures Appl.}, \textbf{(9)}, 102(1),  153-172 (2014).
\bibitem{DR1}  Delgado, J. and  Ruzhansky, M.: Schatten classes on compact manifolds: Kernel conditions. {\it J.  Funct. Anal.}, 267(3), 772-798 (2014)
		
		\bibitem{DR3} Delgado, J. and  Ruzhansky, M.: Kernel and symbol criteria for Schatten classes and r-nuclearity on compact manifolds.{\it  C. R. Acad. Sci. Paris. Ser. I.}, 352,  779--784 (2014).
		
		\bibitem{DR5} Delgado, J. and Ruzhansky, M.: Fourier multipliers, symbols and nuclearity on compact manifolds. {\it J. Anal. Math.},  135(2) 757--800 (2018).



\bibitem{Dix66} Dixmier, J. Existence de traces non normales, (French) C. R. Acad.
Sci. Paris 262 (1966) 1107--1108.

\bibitem{Duo} Duoandikoetxea, J.: {Fourier Analysis}, Amer. Math. Soc.  (2001)

\bibitem{f} Folland, G.B.: Harmonic Analysis in phase space. Princeton University Press, (1989) 


\bibitem{Fedosov} Fedosov, B. Golse, F. Leichtnam, E.  Schrohe, E. The Noncommutative Residue for Manifolds with Boundary. J. Funct. Anal. Vol. 142 (1), 1996, 1--31.

\bibitem{GGIS} Gayral, V.,  Gracia-Bond\'ia, J.M., Iochum, B.  Sch\"ucker, T.,  V\`arilly, J.C. Moyal planesare spectral triples. Comm. Math. Phys. 246 (2004), 569–623.


\bibitem{GrafakosBook}  Grafakos, L., Classical Fourier Analysis
Grad. Texts in Math., vol. 249, Springer-Verlag, New York (2008)

\bibitem{Schrohe2}   Grubb, G. Schrohe, E. Trace expansions and the noncommutative
residue for manifolds with boundary, Univ. of Copenhagen, Preprint Series
no. 20., 1999

\bibitem{Hor1} H\"ormander, L.: { Pseudo-differential Operators and Hypo-elliptic equations } Proc. Symposium on Singular Integrals, Amer. Math. Soc. \textbf{10}, 138-183 (1967)

\bibitem{Hor2} H\"ormander, L.: { The Analysis of the linear partial differential operators} Vol. III. Springer-Verlag, (1985)

\bibitem{IW} Ionescu, A.D.,  Wainger, S.  $L^p$ boundedness of discrete singular Radon transforms, J. Amer. Math.
Soc. 19(2) (2005),  357--383.

\bibitem{Mc} Mclean, W.M.: {Local and Global description of periodic pseudo-differential operators,} Math. Nachr. \textbf{150},  151-161 (1991)

\bibitem{s1} Molahajloo, S.: {A characterization of compact pseudo-differential operators on $\mathbb{S}^1$ } Oper. Theory Adv. Appl. Birkh\"user/Springer Basel AG, Basel. \textbf{213}, 25-29 (2011)

\bibitem{s2} Molahajloo, S., Wong, M.W.: {Pseudo-differential Operators on $\mathbb{S}^1$. } New developments in pseudo-differential operators, Eds. L. Rodino and M.W. Wong.  297-306 (2008)

\bibitem{m} Molahajloo, S., Wong, M. W. .: { Ellipticity, Fredholmness and spectral invariance of
pseudo-differential operators on $\mathbb{S}^1.$} J. Pseudo-Differ. Oper. Appl. \textbf{1} 183-205 (2010)




\bibitem{Pie} Pierce, L. Discrete Analogues in Harmonic Analysis. P.h.D Thesis, Princeton University (2009).

\bibitem{Pie2}  Pierce, L.   On discrete fractional integral operators and mean values of Weyl sums.  Bull. London Math. Soc., 43 (2011) 597--612.


\bibitem{Schrohe} Schrohe, E. Noncommutative Residues, Dixmier's Trace, and Heat Trace Expansions on Manifolds with Boundary.  In B. Booss-Bavnbek and K. Wojciechowski (eds), Geometric Aspects of Partial Differential Equations. Contemporary Mathematics, vol. 242, Amer. Math. Soc. Providence, R.I., 1999, p. 161--186


\bibitem{Hooley} Hooley, C. On Hypothesis $K^*$ in Waring’s problem, Sieve Methods, Exponential Sums, and their
Applications in Number Theory, London Math. Soc. Lecture Notes No. 237, Cambridge University Press,
(1997),  175--185.


\bibitem{Rab1} Rabinovich, V. Exponential estimates of solutions of pseudodifferential equations on the
lattice $(\mu\mathbb{Z})^n$: applications to the lattice Schr\"odinger and Dirac operators. J. Pseudo-Differ. Oper. Appl., 1(2), (2010), 233--253.

\bibitem{Rab2} Rabinovich, V. . Wiener algebra of operators on the lattice $(\mu\mathbb{Z})^n$ depending on the
small parameter $\mu>0$. Complex Var. Elliptic Equ., 58(6), (2013), 751--766.

\bibitem{RabRoch}  Rabinovich V. S. and  Roch S., Pseudodifference operators on weighted spaces and applications to discrete Schr¨odinger operators, Acta Applicandae Math.
84 (2004), 55--96.

\bibitem{Rab3}  Rabinovich, V. S.,  Roch, S. The essential spectrum of Schr\"odinger operators on lattices. J. Phys. A, 39(26), (2006), 8377--8394.

\bibitem{Rab4} Rabinovich, V. S.,  Roch, S. Essential spectra and exponential estimates of eigenfunctions
of lattice operators of quantum mechanics. J. Phys. A, 42(38), (2009), 385--207.


\bibitem{rod} Rodriguez, C. A.,  {$L^p-$estimates for pseudo-differential operators on $\mathbb{Z}^n$, } J. Pseudo-Differ. Oper. Appl.   {1}, (2011),  183--205.

\bibitem{Ruz} Ruzhansky, M., Turunen, V.: {Pseudo-differential Operators and Symmetries: Background Analysis and Advanced Topics } Birkha\"user-Verlag, Basel, (2010)

\bibitem{Ruz-2}Ruzhansky, M., Turunen, V.: {Quantization of Pseudo-Differential Operators on the Torus} J Fourier Annal Appl. Vol. 16. pp. 943-982. Birkh\"auser Verlag, Basel, (2010)

\bibitem{Rt}  Ruzhansky, .,  Turunen, V., On the toroidal quantization of periodic pseudodifferential operators, Numerical Functional Analysis and Optimization, 30 (2009), 1098--1124.


\bibitem{tur} Turunen, V., Vainikko, G.: {On symbol analysis of periodic pseudodifferential operators} Z. Anal. Anwendungen.  \textbf{17},  9-22 (1998)




\bibitem{st}  Stein, E., Wainger, S.,  Discrete analogues of singular Radon transforms, Bull. Amer. Math. Soc. (N.S.) 23 (1990), no. 2, 537-544.

\bibitem{st2}Stein, E., Wainger, S., Discrete analogues in harmonic analysis, I. $l^2$ estimates for singular Radon transforms," Amer. J. Math. 121(6) (1999),   1291--1336.

\bibitem{st3} Stein, E., Wainger, S. Discrete analogues in harmonic analysis, II. Fractional integration, J. Anal. Math. 80 (2000),  335--355.
    

\bibitem{Wodzicki}  Wodzicki, M. Noncommutative residue I, Fundamentals, in K-theory, arithmetic and geometry (Moscow, 1984--1986), 320--399, Lecture Notes in Math., 12-89,
Springer, Berlin, 1987.

\bibitem{Wong} Wong, M. W.: An Introduction to Pseudo-Differential Operators, Third Edition, World Scientific, 2014.

\bibitem{Wong2} Wong, M. W.: { Discrete Fourier Analysis } Birkh\"auser, (2011)




\end{thebibliography}

\end{document}